\documentclass[12pt]{amsart}
\usepackage{latexsym,fancyhdr,amssymb,color,amsmath,amsthm,graphicx,listings,comment}
\usepackage[section]{placeins} 
\pagestyle{fancy}
\newtheorem{thm}{Theorem}  \newtheorem{propo}{Proposition} 
 
\definecolor{red1}{rgb}{1,0.9,0.9} \definecolor{blue1}{rgb}{0.9,0.9,1} \definecolor{green1}{rgb}{0.9,1,0.9}
\def\question#1{ \vspace{2mm} \begin{center} \fcolorbox{blue1}{blue1}{ \parbox{13.2cm}{{\bf Question:} #1}} \vspace{2mm} \end{center} }

\setlength{\parindent}{0cm} \setlength{\topmargin}{-1.0cm} \setlength{\headheight}{0.5cm} \setlength{\textheight}{23cm}
\setlength{\oddsidemargin}{0cm} \setlength{\evensidemargin}{0.0cm} \setlength{\textwidth}{15.5cm}
\let\paragraph\subsection
\def\B#1#2{{#1\choose #2}}

\title{Constant index expectation curvature for graphs or Riemannian manifolds}
\fancyhead{}
\fancyhead[LO]{\fontsize{9}{9} \selectfont OLIVER KNILL}
\fancyhead[LE]{\fontsize{9}{9} \selectfont CONSTANT CURVATURE}

\author{Oliver Knill}
\date{December 23, 2019}
\address{Department of Mathematics \\ Harvard University \\ Cambridge, MA, 02138 }
\subjclass{05Cxx, 57M15, 68R10, 53Axx}
\keywords{Curvature, Riemannian manifolds, graphs}

% December 23, 2019, no constant Euler curvature for 4 manifolds in general
% December 19, 2019, program
% December 12, 2019, expand abstract to a new intro
% December 2,  2019, a bit of proofreading
% November 21, 2019, write down proofs 
% November 17, 2019, the 1d theorem
% November 16, 2019, new file, split from Hopf paper

\begin{document}
\maketitle

\begin{abstract}
An integral geometric curvature $K_\mu$ is defined as the
index expectation $K(x)=E_{\mu}[i(x)]$ if a probability measure
$\mu$ is given on vector fields on a Riemannian manifold or on a finite simple graph.
We give examples of finite simple graphs which 
do not allow for any constant $\mu$-curvature and prove that for
one-dimensional connected graphs, there is a convex set of constant 
curvature configurations with dimension of the first Betti number of the graph. 
In particular, there is always a unique constant curvature solution for trees. 
\end{abstract}

\section{In a nutshell}

\paragraph{}
If a probability distribution $p_x$ is given on the vertex set $V_x$ of 
every complete sub-graph $x \in G$ of a finite simple graph $(V,E)$, 
one obtains a curvature $K(v) = \sum_{x \in G} p_x(v) \omega(x)$ with 
$\omega(x)=(-1)^{{\rm dim}(x)}$. 
Such a curvature $K$ satisfies the Gauss-Bonnet formula 
$\chi(G)=\sum_{x \in G} \omega(x) = \sum_{v \in V} K(v)$ for the Euler characteristic 
$\chi(G)$ of the graph $(V,E)$ or its simplicial complex $G$. 
If $p_x(v) \in \{ 0,1 \}$, this is a Poincar\'e-Hopf formula with integer 
index values $i(v)=K(v)$ \cite{poincarehopf,parametrizedpoincarehopf,PoincareHopfVectorFields}. 
If $p_x$ is the uniform distribution on each set $V_x$, it produces the 
Gauss-Bonnet-Chern integrand $K(v)=\sum_{x, v \in x} \omega(x)/(|x|+1)$ \cite{cherngaussbonnet}.
Let $\mathcal{C}$ denote the set of graphs for which a probability measure
$p_x$ on $V_x$ exists for each $x \in G$ such that $K(v)$ is constant. 
One can also ask to minimize the variance 
${\rm Var}[K]=\sum_{v \in V} (K(v)-m)^2/|V|$, where $m$ is $\chi(G)/|V|$ is the average
curvature. The question whether a given complex $G$ is in $\mathcal{C}$ is a linear
programming problem as it attempts to find solutions of a linear system of equations
under finitely many inequality conditions which assure that the $p_x(v)$ are in the interval
$[0,1]$ and that they add up to $1$.

\begin{figure}[!htpb]
\scalebox{0.4}{\includegraphics{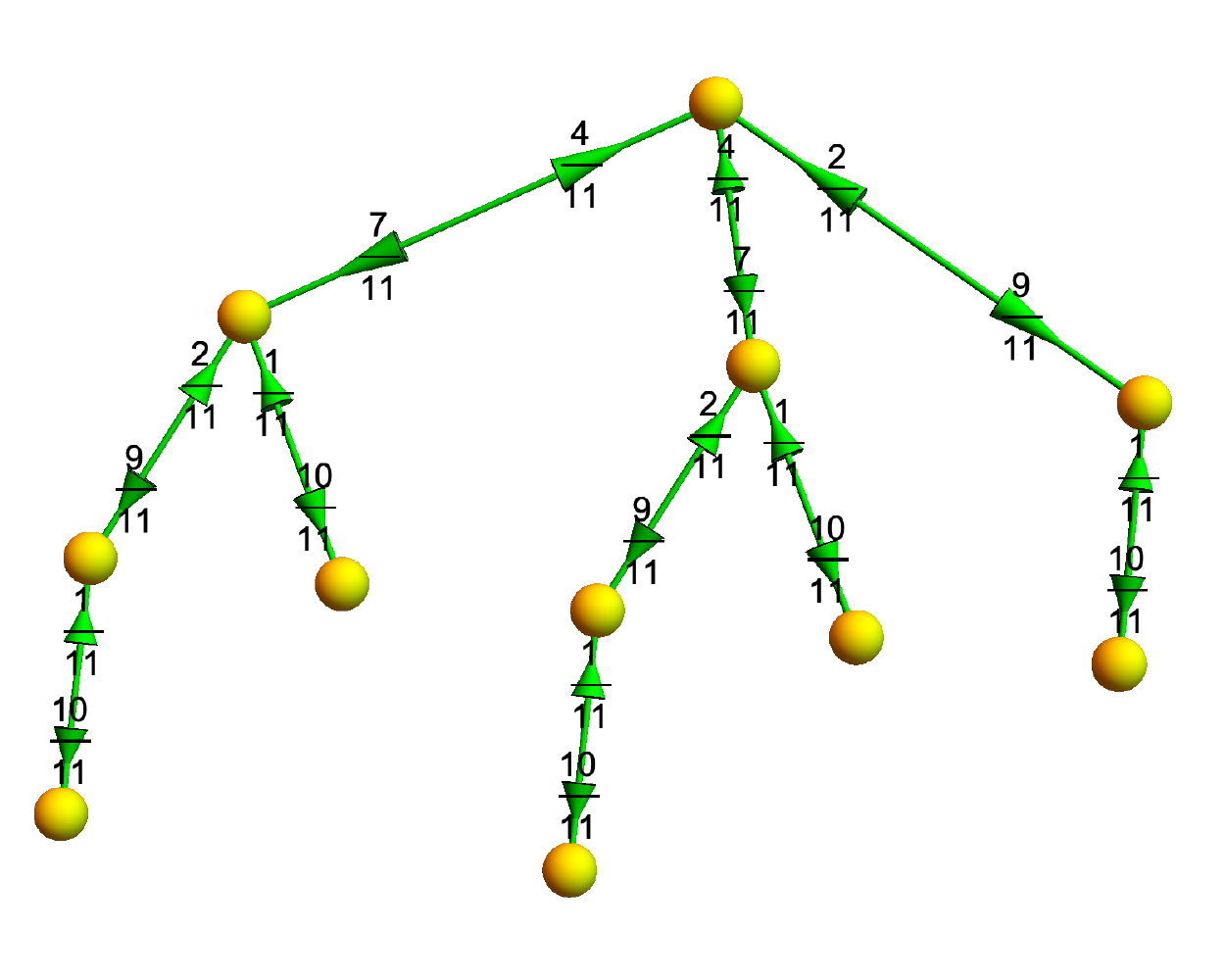}}
\scalebox{0.4}{\includegraphics{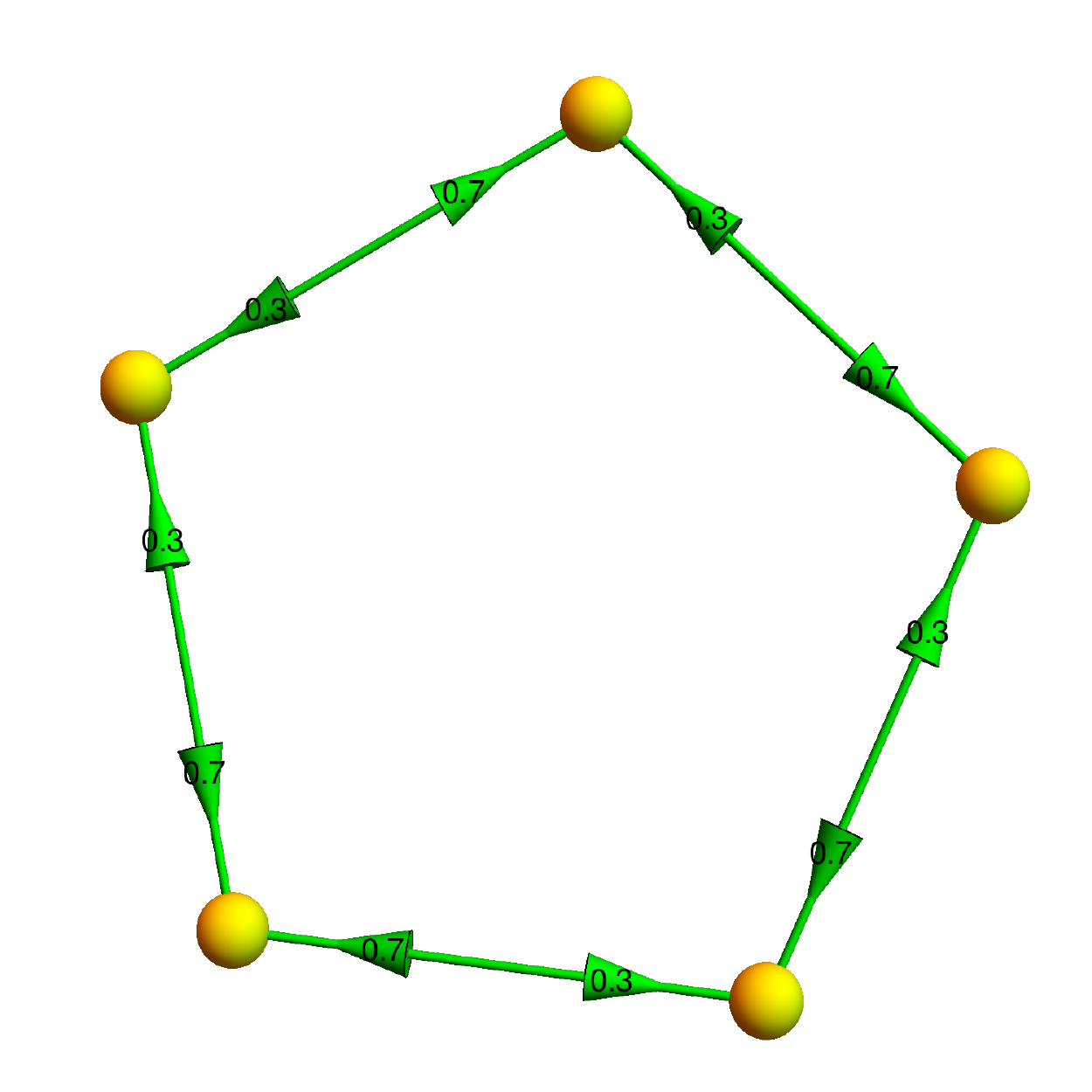}}
\scalebox{0.4}{\includegraphics{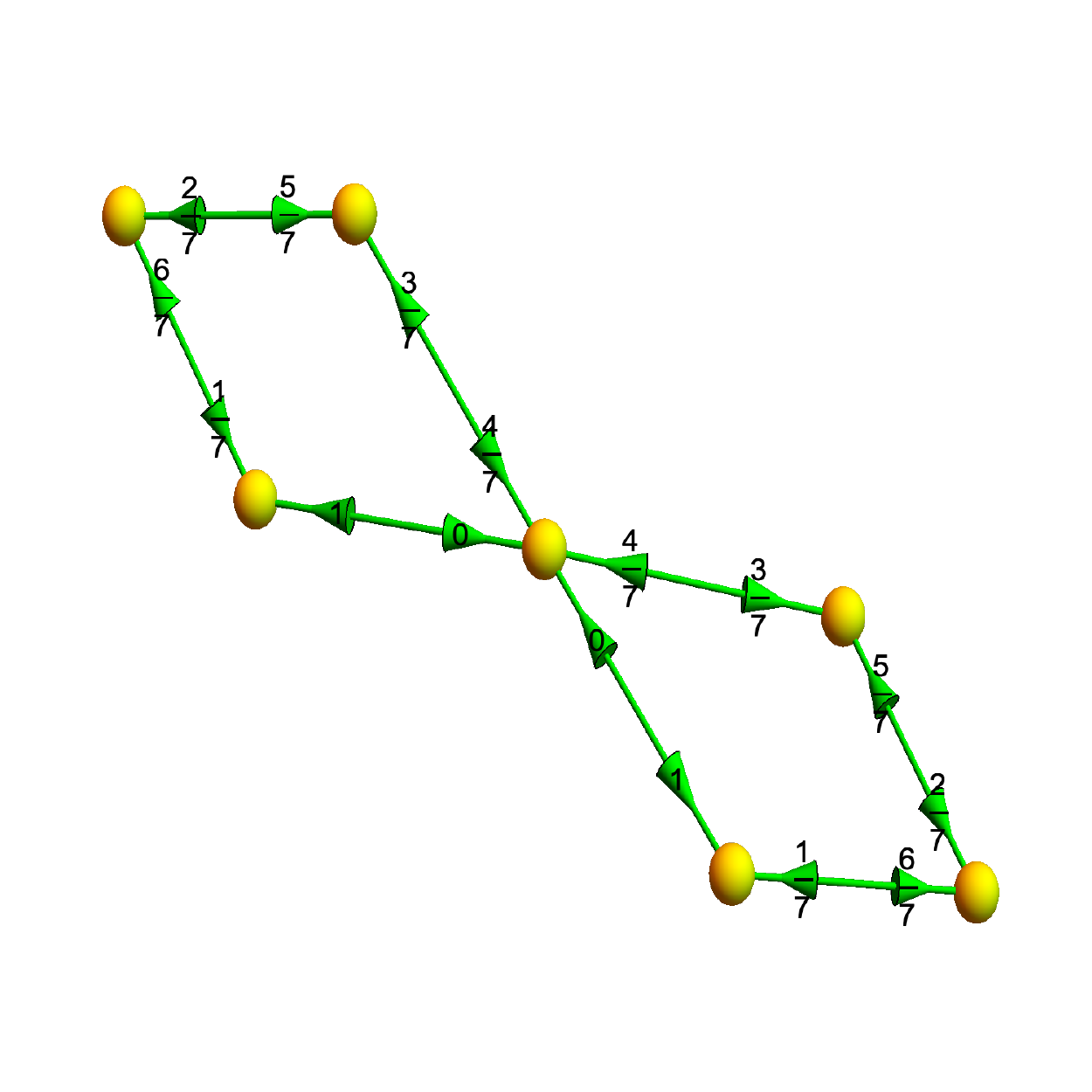}}
\label{treecircle}
\caption{
For a tree $G=(V,E)$, one can distribute the ``energy" value $\omega(x)=-1$ of every edge
$x \in E$ in a unique way to the two vertices. There is then a unique curvature 
$K(v)=1-\sum_{x \in E, v \in x} p_x(v)$ satisfying 
Gauss-Bonnet $\sum_{v \in V} K(v)=\chi(G)=1$ such that $K(v)$ is 
constant $1/|V|$. For a circular graph, there is an entire interval of solutions, just
take the probability space $p_x$ the same on each edge. We chose here $p_x=\{0.3,0.7\}$.
In the case of a figure-$8$ graph, we can even tune two parameters. 
}
\end{figure}

\paragraph{}
We prove here that triangle-free graphs are in $\mathcal{C}$ and give 
examples of graphs outside $\mathcal{C}$. The examples outside $\mathcal{C}$ 
are non-manifold like so far as we have no examples yet of $d$-graphs outside $\mathcal{C}$. 
We furthermore note that in the $1$-dimensional connected case, the solution set of 
probability distributions solving the constant curvature equation is a 
convex set of dimension $b_1=1-\chi(G)$. The integer $b_1$ is the only relevant Betti number for 
one-dimensional connected complexes. In particular, the solution set 
is unique for trees. One can then get probability distributions $p_x$ by 
considering a probability space $(\Omega,\mu)$ of locally 
injective functions $g$ (colorings) on the graph $G$ and get the curvature 
$K(v)=E_{\mu}[i]$ as index expectation. We have explored in
\cite{indexexpectation,colorcurvature} how to get the standard
Gauss-Bonnet curvature as index expectation. 

\paragraph{}
The constant curvature question can be ported to smooth compact manifolds by taking 
a probability space $(\Omega,\mu)$ of Morse functions on $M$ and defining curvature 
$K(v)=K_{\mu}(v)$ as index expectation. We have experimented with 
that (see e.g. \cite{eveneuler,KnillBaltimore}) as there are various natural measures
which can be defined as such like taking heat kernel functions $f(y) = e^{-t L}(x,y)$
and let $\Omega$ be the manifold itself with probability measure $dV$.
An important example of a curvature is the Gauss-Bonnet-Chern integrand for a compact 
Riemannian manifold $M$. It is not possible to realize a metric on $M$ in general which has constant
Gauss-Bonnet-Chern curvature, we have not yet found a compact connected manifold that 
can not be equipped with constant index expectation curvature. See Question~(\ref{constantcurvature}).

\paragraph{}
Back to the discrete case, one can look at the question for $d$-graphs.
These finite simple graphs which are discrete manifolds in the sense that 
they have the property that every unit sphere $S(v)$ is a $(d-1)$-sphere.
A $d$-sphere is then just a $d$-graph which when punctured becomes contractible. 
The constant curvature problem for discrete manifolds is not yet studied.
It relates to the question on how fast the Euler characteristic of a $d$-graph
with $n$ vertices can grow as a function of $n$. This is of independent interest.

\paragraph{}
We know that for general Erdoes-R\'enyi graphs in $E(n,p)$, 
the maximal Euler characteristic
in $E(n,p)$ grows exponentially along sub-sequences in $(p_k,n_k)$.
The reason is because the expectation value of the Euler characteristic on $E(n,p)$ is
explicitly given as
$$ {\rm E}_{n,p}[\chi] = \sum_{k=1}^n (-1)^{k+1} \B{n}{k} p^{\B{k}{2}} \;  $$
\cite{randomgraph}. 

\paragraph{}
But the growth for $d$-graphs (discrete $d$-manifolds) appears to be unknown: a possible 
super linear grow along a sub-sequence of $n$ would produce $d$-graphs for which the curvature 
can not be made constant. Formulated differently, if every discrete
$d$-manifold allowed for constant curvature then the maximal Euler characteristic
$\chi(G)$ were only grow linearly. Let $X_d(n)$ denote the maximal $\chi(G)$ which 
a $d$-graph with $n$ elements can have. Interesting is the following question: 

\question{ How fast does $X_d(n)$ grow for $n \to \infty$?  }

\section{A trade allegory} 

\paragraph{}
Before we start with the actual paper, let us look at the following distribution
problem for a finite network with nodes $V$ and connections $E$. It is equivalent
to the constant curvature problem we study here for one-dimensional networks. 

\paragraph{}
Consider the following cost distribution problem for a finite simple graph $(V,E)$: \\

{\bf Assume that each connection $(a,b) \in E$ between two nodes $a,b \in V$ 
costs a fixed amount $1$. How do we distribute the cost for each transaction $(a,b)$ 
to the two parties $a$ and $b$ in order that the total cost of all parties is the same? }

\paragraph{}
Our result shows that one can solve the fair distribution problem in a unique
way if the network is a tree and that there is a $b$-dimensional set of 
distribution parameters if the network has $b=b_1$ independent loops. The integer 
$b_1$ is the first Betti number of the graph. 

\paragraph{}
Now look at the case when the network also can have triangles (serving as two dimensional faces) but no
complete sub-graphs $K_4$ with $4$ vertices (these $K_4$ subgraphs are three dimensional tetrahedral 
simplices). Let $F$ be the set of triangles. The simplicial complex $G$ defined by the network now is
the union $V \cup E \cup F$ of zero, one and two dimensional parts.
The Euler characteristic is given by the Descartes formula
$\chi(G) = \sum_{x \in G} \omega(x) = |V|-|E| + |F|$. In the 
case of planar graphs where one can clearly extend the notion of face to other polygonal shapes, one has 
$\chi(G)=2$ which was first secretly recorded
by Ren\'e Descartes \cite{Aczel} and then proven by Euler for planar graphs \cite{Richeson}. 
A period of confusion \cite{lakatos} followed which can be attributed to 
definitions of polyhedra, especially also in higher dimensions \cite{Gruenbaum2003}. 

\paragraph{}
So, lets look at a finite simple graph in which there are no complete subgraphs $K_4$.
We think of it again as a trading network, in which exchanging stuff over some some connection
produces a fixed amount $1$ of cost. But now, each triangular clique produces a synergy as it allows to save
cost. Each trade triangle produces the same positive amount $1$ of profit which needs to be distributed to the
three players. The constant curvature problem is now equivalent to the following problem. 

\paragraph{} 
Cost distribution problem with synergy. \\

{\bf Assume each connection $(a,b)$ between two nodes produces a fixed amount $1$ of cost
and each triangular trade-relation generates the same fixed amount $1$ of synergy. How do 
we split the transaction cost for each transaction to the two players and how do we split 
the ``synergy bonus" for each triple of cooperating players so that every player has the same total? }

\paragraph{}
Now the situation is different and we can in general no more distribute things equally. Obviously, in 
part of the ``world", where better connections and more triangles are present, one can work
more effectively. If there are other parts, where the beneficial element of the triangular
synergy is missing, it is impossible to make up for the missing synergy in that part of the 
world. 

\paragraph{}
The fish graph displayed in Figure~(\ref{fishgraph}) illustrate this situation. 
In the main body of the fish there are triangles present which produces an obvious advantage there. 
In the swim fins, where no triangles are present, it is impossible to make up for the missing
benefit. The lack of connectivity is a handicap which can not be fixed by locally distributing
the costs more effectively. One would need a non-local redistributions (a development help so to
speak) in order to equalize the cost. Now, since curvature should always be a local quantity, 
this is not possible here. 

\paragraph{}
This simple trade model for cost distribution could be made more realistic or adapted to other 
networks. One way is to replace the ``topological cost" $\omega(x)= (-1)^{{\rm dim}(x)}$ leading to 
Euler characteristic with some arbitrary cost $H(x)$, a quantity we interpreted as ``energy" in 
\cite{EnergizedSimplicialComplexes}. The cost distribution problem now also depends on 
the value $H(v)$ of the nodes (which in the allegory is a measure for the wealth
of the player). 

\paragraph{}
In any case, the distribution problem is a linear programming problem. If there is a
solution, it can be found by a simplex method. As mentioned in the last section, one 
can in the case where no solution exists to minimize the variance. This is now a 
variational problem which again has constraints given by various inequalities. 

\section{Introduction}

\paragraph{}
Of classical interest in Riemannian geometry are {\bf spaces of constant curvature}. 
Especially well studied is the case of constant sectional curvature which leads to
space forms \cite{WolfConstantCurvature}. One can also study constant curvature curves, and other 
constant curvature manifolds, where the question of course depends on what
``curvature" is. As for curvature on manifolds, besides looking at sectional curvature
leading to ``constant curvature manifolds" one can also look at manifolds with 
constant Euler curvature (the curvature entering the Gauss-Bonnet-Chern theorem), 
constant Ricci curvature (leading to Einstein manifolds), constant scalar curvature 
or then constant mean curvature (which leads to minimal surfaces). Motivated from
physics, where curvature is associated with some sort of energy or mass, the concept
of constant curvature is some sort of equilibrium situation. 

\paragraph{}
A different kind of curvature is obtained by averaging Poincar\'e-Hopf
indices $i_g$ over a probability space $(\Omega,\mu)$ of 
Morse functions $g$ on $M$. The indices of Morse function are $\{-1,0,1\}$-valued 
divisors on $M$. They can also be seen as signed Dirac measures meaning pure point measures supported
on finite sets. The usual Gauss curvature of a Riemannian $2$-manifold $M$ is an 
example: Nash embed $M$ into a higher dimensional Euclidean space $E$ and take the probability 
space of all linear functions in $E$ which is rotational invariant. One of
historically earliest cases of curvature, the (solid) angle excess for convex polytopes 
geometrically realized in $E$ can be seen that way. 
For almost all linear functions on $E$ one has an index defined on the vertices of the polytop.
Averaging over all linear functions gives the solid angle.

\paragraph{}
Which manifolds allow for constant index expectation curvature? 
By the uniformization theorem, a $2$-manifold always allow for a constant Gauss
curvature in that way. The question whether there are
even-dimensional manifolds which do not allow for a Riemannian metric with constant 
Gauss-Bonnet-Chern curvature appears to be not studied so far. Maybe it is too obvious
that this is in general not possible:  we note that this can happen already for $4$-manifolds. 
But this is only given by example. We do not know for example for concrete cases like 
$M=S^2 \times S^2$ whether there exists a Riemannian metric $g$ on $M$ leading to constant 
Gauss-Bonnet-Chern curvature. 

\paragraph{}
In comparison, as the Hopf conjectures show, it is unknown whether there is a 
positive curvature metric on a space like $M=S^2 \times S^2$. The focus on 
the Gauss-Bonnet-Chern integrand has been abandoned maybe because in dimension $d \geq 6$
the algebraic Hopf conjecture (the question whether one achieve 
positive Gauss-Bonnet-Chern integrand if the manifold has positive curvature) 
has failed: there are positive curvature 6-manifolds for which the Gauss-Bonnet-Chern 
integrand can become negative at some places. (See e.g. \cite{Weinstein71}). 

\paragraph{}
We were led to the index expectation curvature also through
such algebraic questions (for us mostly in the discrete). 
Having many curvatures available renders the algebraic Hopf conjecture again interesting: 

\question{Does every even-dimensional compact connected positive curvature manifold allow
for some positive $\mu$-curvature?}

\paragraph{}
Of course the question must be difficult as answering it affirmatively would imply the Hopf conjecture
which history has shown to be difficult. 
The above question is intriguing because if a manifold has positive curvature then there is
a positive $\mu$ curvature on it (namely the constant curvature we establish).
An affirmative answer would show that 
the just formulated question does not only imply
but is equivalent to the Hopf conjecture: if the Hopf conjecture holds, then we
can realize a positive $\mu$-curvature.  

\paragraph{}
As for constant sectional curvature, we do not explore the discrete case except mentioning 
one simple case of $d$-graphs (discrete d-dimensional manifolds in which every unit sphere 
is a $(d-1)$-sphere) for which all embedded wheel graphs are isomorphic 
(have the same number $k$ of vertices). If one defines 
curvature as $1-k/6$ for such a section, we have looked at
positive curvature case in \cite{SimpleSphereTheorem} and shown that every 
positive curvature $d$-graph is necessarily a $d$-sphere. 

\paragraph{}
Which finite simple $d$-graphs have constant curvature {\bf in the strong 
sense} that all embedded wheel graphs are 
isomorphic? In the case $d=2$ only the octahedron (constant curvature $K=1/4$ on the 8 vertices) 
and the icosahedron (constant curvature $K=1/6$ on the 12 vertices).
In the case $d=3$, it follows from the classification of regular polytopes that 
there is only the 16-cell $4 S_0 = S_0+S_0+S_0+S_0$, (where $+$ is the join), 
and the 600-cell which have constant curvature. In dimensions $d>3$, there is only the $d$-dimensional
cross polytop $(d+1) S_0$ again using the Schl\"afli classification of regular platonic solids in $d$-dimensions.
The reason is that if $G$ has constant curvature, then every unit sphere must have constant curvature
which forces the graphs to be Platonic. In other words:

\begin{propo}
For $d=2$ and $d=3$, there are exactly two constant curvature $d$-graphs in the
strong sense. For $d \geq 4$, there is a unique constant curvature graph in the strong sense:
the $d$-dimensional cross polytop.
\end{propo}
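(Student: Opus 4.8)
The plan is to induct on the dimension $d$, using the quoted principle that strong-sense constant curvature forces the graph to be ``Platonic''. I read the strong-sense condition as: all codimension-two links are cycles $C_k$ of one fixed length $k$, so that the sectional curvature $1-k/6$ is a single positive constant; by the sphere theorem of \cite{SimpleSphereTheorem} the graph $G$ is then a $d$-sphere, which is what legitimizes the comparison with regular polytopes. Positivity forces $k\in\{3,4,5\}$, and the value $k=3$ is the source of every exclusion, since the would-be link $C_3=K_3$ is not a $1$-sphere: its Whitney complex is the filled triangle. For the base case $d=2$ I would argue by Gauss--Bonnet: if every vertex link is a $k$-cycle on a $2$-sphere with $n$ vertices, then $n(1-k/6)=\chi=2$, so $k=3,4,5$ force $n=4,6,12$; the first is the tetrahedron, excluded because $K_4$ fills to a $3$-ball rather than a $2$-sphere, while $k=4$ and $k=5$ are precisely the octahedron and the icosahedron.

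For the inductive step I would first show the condition descends to unit spheres. If $\sigma$ is a codimension-two simplex of $S(v)$, then $\{v\}\cup\sigma$ is a codimension-two simplex of $G$ with the same link, so every $S(v)$ again has all its codimension-two links equal to $C_k$ and is therefore a strong-sense constant curvature $(d-1)$-graph. By the induction hypothesis each $S(v)$ lies on the classified list in dimension $d-1$, so all vertex figures of $G$ are isomorphic to a single fixed regular $(d-1)$-graph $L$. I would then upgrade this local homogeneity to global regularity: a simplicial $d$-sphere all of whose vertex figures equal a fixed regular polytope $L$ is combinatorially regular, and by Schl\"afli's classification it must be the boundary of the unique regular $(d+1)$-polytope whose vertex figure is $L$.

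It then remains to enumerate the survivors. Simplices are discarded at every level, their codimension-two links being the forbidden $C_3$ (equivalently $K_{d+2}$ fills in and is not a sphere), and the non-simplicial regular polytopes --- the cube, the tesseract, the $24$-cell and the $120$-cell --- are discarded because their facets are not simplices, so their boundaries are not $d$-graphs. What remain are the cross-polytopes $(d+1)S_0$ in every dimension, together with the two low-dimensional exceptions of Schl\"afli's list, the icosahedron for $d=2$ and the $600$-cell for $d=3$, whose codimension-two links are $C_5$ with positive curvature $1/6$. Since the list contains no exceptional regular polytopes in dimension $\geq 5$, the count is two for $d=2$, two for $d=3$, and exactly one, the cross-polytope, for $d\geq 4$.

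The main obstacle is the global step: ruling out locally regular but globally exotic $d$-spheres whose vertex figures are all $L$ yet which are not the polytope boundary. I expect to handle this either by a developing-map argument --- the universal cover is the regular polytope and simple connectivity of the sphere then identifies $G$ with it --- or, in the decisive cross-polytope case, by a direct combinatorial reconstruction: all vertex figures being cross-polytopes forces each vertex to have a unique non-neighbour, the resulting antipodal pairs partition the $2(d+1)$ vertices, and the join structure $G=(d+1)S_0$ is recovered. The existence of the icosahedron and the $600$-cell is classical; the content of the proposition is the uniqueness that this rigidity, together with Schl\"afli's enumeration, secures.
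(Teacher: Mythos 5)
Your skeleton is the same as the paper's: the paper's entire proof is the one-sentence remark that constant curvature passes to unit spheres and ``forces the graphs to be Platonic'', followed by an appeal to the Schl\"afli classification, and your inheritance lemma $\mathrm{lk}_{S(v)}(\sigma)=\mathrm{lk}_G(\{v\}\cup\sigma)$, the induction on $d$, and the final enumeration are a careful expansion of exactly that. The trouble is the step you yourself flag as the main obstacle, because your proposed repair does not work. Under your reading of the strong-sense condition (all codimension-two links equal to one fixed $C_k$), the conclusion that $G$ is a $d$-sphere is simply false, so no correct sphere theorem can deliver it: the antipodal map of the $600$-cell is a free simplicial involution moving every vertex to graph distance $5$, hence the quotient is a legitimate $60$-vertex $3$-graph triangulating $\mathbb{R}P^3$ in which every edge link is still $C_5$ and every vertex figure is still an icosahedron. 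This graph satisfies your hypothesis, is locally indistinguishable from the $600$-cell, yet is neither a sphere nor a regular polytope boundary; so ``all vertex figures equal a fixed regular $L$'' does \emph{not} imply global regularity, and simple connectivity --- precisely what the sphere-theorem citation was supposed to supply --- is unavailable. The same defect defeats your $d\geq 4$ enumeration: absence of $\{3,3,3,5\}$ from Schl\"afli's list of polytopes does not exclude the $k=5$ branch, since $[3,3,3,5]$ is a compact hyperbolic Coxeter simplex group and its deep torsion-free finite-index subgroups yield compact $4$-graphs with all codimension-two links $C_5$.

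The other horn of the dilemma is no better. The hypothesis under which the sphere theorem of \cite{SimpleSphereTheorem} is actually valid is that \emph{every induced wheel subgraph} has at most five rim vertices, where the rim may be any induced cycle inside a unit sphere, not only a codimension-two link. If you adopt that stronger reading of ``strong sense'', then sphere-ness and your developing-map plan become available --- but the $600$-cell ceases to be an example at all: the icosahedron contains an induced $6$-cycle (with north pole $n$, south pole $s$, upper and lower pentagons $u_i$, $l_i$ and $u_i\sim l_i,l_{i+1}$, take $u_1,u_2,l_3,s,l_5,u_5$), so the $600$-cell contains embedded $W_6$'s alongside its $W_5$'s, and the count for $d=3$ would be one, not the two the proposition asserts. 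So the statement is true only under the codimension-two-link reading \emph{together with} an explicit restriction to $d$-spheres (or simply connected $d$-graphs); a correct proof must assume or prove that restriction, and neither your draft nor the paper's one-line argument does. What does survive intact is the $k=4$ branch: your antipodal-pair reconstruction of $(d+1)S_0$ from the hypothesis that all vertex figures are cross polytopes needs only connectedness, no sphere assumption, and is the robust part of the argument.
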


\paragraph{}
We will weaken the constant curvature condition (the notion that all sectional curvatures
are positive) elsewhere to become more realistic. It uses of course 
 index expectation. It will allow to get discrete models of constant curvature which
look like constant curvature manifolds in the continuum and which also should lead to 
more realistic sphere theorems in the discrete. Of course, such theorems then would need
pinching conditions analogue to the continuum. The discrete case is then a play ground for
analogue Hopf questions. 

\paragraph{}
Integral geometric questions \cite{Santalo,Santalo1} have been studied also in combinatorial
settings \cite{KlainRota}. It produces an alternative to tensor calculus. 
It allows to define classical distances for example: if a probability measure $\mu$ is given
on the space of linear functions on an Euclidean space $E$ in which a Riemannian manifold $M$
is embedded, then the distance of a curve $\gamma$ can be measured as the expectation number
of the number of intersections of hyperplanes $f=0$ with $\gamma$. This Crofton approach 
\index{Crofton} recovers the Riemannian metric. It is more than natural also see curvature as an 
expectation, an expectation of Poincar\'e-Hopf indices. And this has also been done classically
for a while now \cite{Blaschke,Nicoalescu}.

\paragraph{}
For manifolds, taking probability measures on Morse functions is more convenient than taking
the measure on the larger space of vector fields with finitely many isolated non-degenerate equilibrium points.
On the other hand, taking a probability space on functions is less convenient in the discrete
and it is better to work with probability spaces of vector fields. The later leads to 
the frame work stated initially, where probability measures $p_x$ on simplices $x$ are given. 
The simplest set-up is to distributing the curvature values $\omega(x)$ from the simplices to the vertices.
This can be done if each simplex $x$ is equipped with a probability space $p_x$ 
\cite{PoincareHopfVectorFields}.

\section{Constant curvature manifolds}

\paragraph{}
Let us look now at the case of smooth manifolds and ask: 

\question{
Does every compact connected smooth manifold $M$ admit a constant index expectation curvature?
Is there is a probability space $(\Omega,\mu)$ of Morse functions $g: M \to \mathbb{R}$ 
such that the expectation $K(v)=E_{\mu}[i_g(v)]$ of the Poincar\'e-Hopf index $i_g$ divisors 
on $M$ is a constant function on $M$.
\label{constantcurvature}
}

\paragraph{}
The question is not interesting in dimension $d=1$ because $M$ is then a circle 
which has constant index expectation $0$, obtained by embedding $M$ as the standard 
circle in the plane and taking the probability space of all 
linear functions $f(x,y) = \cos(\theta) x + \sin(\theta) y$ of functions equipped with 
the uniform measure. This space induces a space of Morse functions on $M$. 

\paragraph{}
Now lets look at a general smooth, compact and connected manifold of dimension $d \geq 2$. 
Pick a Riemannian metric on $M$ (it is well known using a partition of unity that a 
smooth manifold can be equipped with a Riemannian metric by patching together metrics given
on each chart). This defines a volume measure on $M$. Normalize it so that it becomes 
a probability measure $dV$ on $M$. A theorem of Brin-Feldman-Katok \cite{BrinFeldmanKatok} assures 
that $M$ admits a smooth Bernoulli diffeomorphism $T$ with respect to such a volume measure. 
The automorphism $T$ on the probability space $(M,\mathcal{A},dV)$ is measure theoretically conjugated
to a Bernoulli system. This theorem needs that the dimension of $M$ is bigger or equal than $2$. 
Pick an arbitrary Morse function $g$ on $M$. Assume the indices of $g$ 
are supported on the set $m_1, \dots, m_n$ of critical points of $g$ in $M$
which are generic with respect to $T$ in the sense that all the orbits $T^k(m_j)$ are
uniformly distributed on $M$ (using a partition of unity it is easy to possibly modify the 
critical points $m_j$ if they would not be generic. The points which are generic in the sense 
of ergodic theory are a set of measure $1$ and therefore dense in $M$ (see e.g. 
\cite{FurstenbergRecurrence,DGS} for the ergodic theory part). 
Now define the sequence $g_k=g(T^k)$ of smooth maps $M \to \mathbb{R}$. 
Because $T$ is a diffeomorphism $M \to M$, the chain rule assures that the translated functions
$g_k$ are all Morse. Their indices are located on the points $T^k(m_j)$. By ergodicity 
already, the point measures $(1/n) \sum_{k=1}^n \delta_{m_j(T^k)}$ converge weakly 
to the constant function $1$ on $M$. 
The question is now whether there is an accumulation point of this sequence
$\mu_n = \frac{1}{n} \sum_{k=1}^n \delta_{g_k}$ on $\Omega$, where $\delta_{g_k}$ is 
the Dirac point measure located on $g_k$. 
This would only work if we had a weak-* compactness, but that requires a tightness
preventing the measure to escape.  The index expectation $K_n(v) = E_{\mu_n}[i(v)]$ 
would then converge to a $T$-invariant 
constant curvature measure $K=\chi(G)/Vol(M)$ which because of the normalization
${\rm Vol}(M)=\int_M 1 dV = 1$ satisfies $\int_M K \; dV=\chi(G)$. 

\paragraph{}
Which smooth functions $K$ on $M$ satisfying $\int_M K(x) \; dV(x) = \chi(G)$ 
can be realized as index expectation $K(x)=E_{\mu}[i(x)]$? 
Lets try to realize a function  $K$ which has no root and is close enough to a constant. 
because of compactness and connectedness of $M$, there exists a $\delta>0$ such
that $K(x) \geq \delta>0$ $-K(x) \geq \delta >0$ and all $x \in M$. If we define 
a new measure $dV$ which has higher weight somewhere, and do the above construction we have also a
higher curvature value there. Let $m=\chi(G)$ be the average curvature. 
Take the measure with density $dV(x) + (K(x)-m) dV$. Assume that this is positive
everywhere. Now pick the ergodic transformation from the Brin-Feldman-Katok theorem to get
an index expectation $K(x)$. 

\paragraph{}
We can also ask which manifolds allow for constant curvature $K_{\mu}$ with $\mu$ supported on 
a {\bf compact subset of $C^2$ functions} in $\Omega$.
We expect that not all manifolds allow for such measures and that the argument is similar
to the argument showing that there is no metric $g$ in general on a compact connected even
dimensional manifold for which the Gauss-Bonnet-Chern curvature is constant. 

\section{Constant curvature graphs}

\paragraph{} 
Before we look at the analogue curvature question in the discrete, let us
start with the question, which graphs have constant Euler curvature 
$$  K(v) = 1+\sum_{k=0} (-1)^k \frac{v_k(S(v))}{k+1}  \; , $$
where $v_k$ counts the number of $k$-dimensional simplices in the unit sphere 
$S(v)$ of $v$. (This curvature \cite{cherngaussbonnet} 
is the analogue of the Gauss-Bonnet-Chern measure in the continuum and appeared
already \cite{Levitt1992}). In one dimension, connected graphs with constant curvature are 
regular graphs like circular graphs $C_k$ with $k \geq 4$, the cube graph or 
dodecahedron graph, the tesseract graph, the complete bipartite graphs $K_{n,n}$ which 
includes $K_{1,1}=K_2$. For 2-graphs, connected examples are the icosahedron 
graph and the octahedron graph.

\question{
Can we characterize the set of connected finite simple graphs for which the Euler-Levitt 
curvature is constant? }

\paragraph{}
The class obviously contains all graphs $G$ for which all unit spheres $S(v)$ are isomorphic
to some fixed graph $H$. Even more generally, it contains all graphs for which
the $f$-vectors of $S(v)$ all agree. But we do not know for example, whether 
this is necessary nor whether this is sufficient to have constant Euler-Levitt curvature. 

\paragraph{}
The question of existence of constant curvature on a graph becomes richer if
curvature is formulated more broadly. We want the curvature function to be located 
on the vertex set $V$ and that it adds up to the Euler characteristic $\chi(G) = \sum_x \omega(x)$.
We want it to be local in the sense that it only depends on the unit sphere of $x$. We also want
it to be intrinsic in the sense that it does not depend on any auxiliary space like an embedding in 
some Euclidean space. 

\paragraph{}
Such curvatures can be obtained by distributing the values
$\omega(x) = (-1)^{{\rm dim}(x)}$ from a simplex $x$ to the vertices in $x$. In other words, 
we make each simplex $x$ a probability space and randomly distribute the ``energy value"
$\omega(x)$ to the zero-dimensional atoms of the simplex. This produces a curvature $K(v)$
on vertices which adds up to Euler characteristic. We probably got to this simple picture
of curvature in \cite{josellisknill} and not yet in \cite{cherngaussbonnet,poincarehopf}. 

\paragraph{}
This set-up is simple and assures that 
curvature remains local and unifies the continuum and discrete. 
In the continuum, it leads to the Poincar\'e-Hopf theorem if probabilities and so curvature
is integer-valued, meaning they are divisors. Then there is the case, where the probability
measures have a uniform distribution \cite{indexexpectation,colorcurvature}. 
In this case, we get the curvature we are used to in the continuum like the Gauss-Bonnet-Chern
integrand for Riemannian manifolds. 

\paragraph{}
Let $G=(V,E)$ be a finite simple graph. 
It turns out that if $G$ is $1$-dimensional, the Euler 
characteristic alone determines how big the solution space of probability measures is. 
Let $p=\{ p_x \; | \;  x \in G \}$ be the set of probability measures on 
the set $G_1$ of complete sub-graphs of a graph, defining the curvature $K$. Here
is a first result. The proof is given later. 

\begin{thm}
For a triangle-free connected graph, the set of measures $p$ producing constant 
curvature is a convex set of dimension $1-\chi(G)$. 
In particular, there is a unique measure $p$ for trees. There are examples of
graphs with triangles for which no measure $p$ produces constant curvature.
\end{thm}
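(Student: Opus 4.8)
The plan is to turn the constant-curvature condition into a linear feasibility problem for the incidence matrix and then decide when its affine solution set meets the probability cube. Since $G$ is triangle-free its complex is $V\cup E$, so $\omega=1$ on vertices and $\omega=-1$ on edges, and with the trivial measure on vertices and $p_e=\{p_e(a),p_e(b)\}$ on each edge $e=\{a,b\}$ the curvature collapses to $K(v)=1-\sum_{e\ni v}p_e(v)$. Gauss--Bonnet forces the common value to be $m=\chi(G)/|V|$, so constant curvature is equivalent to the linear system $\sum_{e\ni v}p_e(v)=1-m=|E|/|V|$ for every $v\in V$. First I would orient each edge, write $t_e=p_e(a)$ for $e$ oriented from $a$ to $b$, and recast the system as $Bt=c$, where $B$ is the signed vertex--edge incidence matrix $\partial_1$ and $c_v=|E|/|V|-h(v)$ with $h(v)$ the number of edges having $v$ as head.

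Next I would prove solvability over $\mathbb{R}$ and read off the dimension. Every column of $B$ carries one $+1$ and one $-1$, so $\mathbf{1}^{\top}B=0$ and $\operatorname{im}B\subseteq\{c:\sum_v c_v=0\}$; since $\sum_v c_v=|E|-\sum_v h(v)=|E|-|E|=0$ the system is always consistent. Connectivity gives $\operatorname{rank}B=|V|-1$, so the solution set is an affine subspace parallel to $\ker B$ (the cycle space) of dimension $|E|-(|V|-1)=1-\chi(G)=b_1$. Intersecting with the constraints $t_e\in[0,1]$ is an intersection of convex sets, hence convex, and it has dimension $b_1$ as soon as it contains a point of the open cube $(0,1)^{|E|}$, since then a small neighborhood inside the affine set stays feasible. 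For a tree $\ker B=0$, the affine set is a single point, which yields uniqueness; I would verify this point lies in the cube by leaf-peeling, a leaf $v$ forcing $p_e(v)=|E|/|V|\in(0,1)$ on its pendant edge and the values remaining in $(0,1)$ as one propagates inward.

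The genuine obstacle is the feasibility step, showing that the affine set meets the open cube, everything else being formal. I would read the problem as a transportation problem in which each edge has one unit to split between its endpoints while each vertex must receive exactly $|E|/|V|$; by the max-flow feasibility criterion (a Hall / Gale--Hoffman condition) a solution with $p_e(v)\in[0,1]$ exists if and only if $|S|\cdot|E|/|V|\le|E(S)|$ for every $S\subseteq V$, where $E(S)$ is the set of edges incident to $S$. The crux is to establish this inequality, and with strict slack when $b_1>0$ so that a strictly interior flow exists. Here connectivity and one-dimensionality must do the work, and the binding constraints come from low-degree vertices: a vertex of degree $d$ absorbs at most $d$, so small-degree vertices sitting in an otherwise edge-dense region are exactly the delicate case and must be controlled for the argument to close.

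For the last assertion I would convert the same feasibility criterion into an explicit obstruction for a graph carrying triangles. There the complex is $V\cup E\cup F$ and $K(v)=1-\sum_{e\ni v}p_e(v)+\sum_{t\ni v}p_t(v)$, so a vertex lying in a triangle-free ``fin'' satisfies $K(v)=1-\sum_{e\ni v}p_e(v)\le 1$ (no triangle term can raise it, and the edge terms can only lower it), whereas vertices inside a triangle-rich ``body'' can be pushed well above $1$. Taking the fish graph described above, I would write out its finite linear program; summing over the fin gives $\sum_{v\in\mathrm{fin}}K(v)\le|\mathrm{fin}|$, while constant curvature with a body-forced average $m=\chi(G)/|V|>1$ would demand $\sum_{v\in\mathrm{fin}}K(v)=m\,|\mathrm{fin}|>|\mathrm{fin}|$. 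This single violated inequality is a Farkas / separating certificate of infeasibility, placing the graph outside $\mathcal{C}$ and completing the statement.
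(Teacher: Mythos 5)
Your real-linear-algebra reduction is sound and, for the dimension count, genuinely cleaner than the paper's route: the paper proves the $b_1$-dimensional statement by induction on the number of independent loops (cut an edge of a cycle, push its energy $-1$ to one endpoint, and reduce to a tree statement with modified leaf energies in $[0,1]$), whereas your incidence-matrix computation (${\rm rank}\,B=|V|-1$ by connectivity, $\ker B$ the cycle space) gives the affine dimension $b_1$ in one stroke. But the step you yourself call the crux --- that the affine solution set meets the cube $[0,1]^{|E|}$, i.e.\ the Gale--Hoffman inequality $|S|\,|E|/|V|\le |E(S)|$ for all $S\subseteq V$ --- is not proved in your proposal, and it cannot be proved in the stated generality, because it is false for some triangle-free connected graphs. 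Concretely, take the figure-eight graph built from two squares sharing a vertex ($7$ vertices, $8$ edges, $\chi=-1$) and attach one pendant edge: the result is triangle-free, connected, with $|V|=8$, $|E|=9$, $\chi=-1$. A leaf $v$ always has $K(v)=1-p_e(v)\in[0,1]$, while constant curvature would force $K(v)=\chi(G)/|V|=-1/8<0$; in your language, the inequality fails at $S=\{v\}$ since $9/8>1$. So the solution set is empty rather than $2$-dimensional, and the binding constraint is exactly the low-degree-vertex phenomenon you flagged (the correct necessary and sufficient condition is that no vertex subset induces a subgraph denser than the whole graph --- true for trees, cycles, and the paper's homogeneous examples, but not for all triangle-free graphs). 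Your framework thus exposes a real restriction that the theorem's statement, and the paper's own induction, pass over: the paper's ``slightly stronger'' tree statement with leaf energies in $[0,1]$ breaks on the same example, since a genuine leaf keeps energy $1$ and forces $K\ge 0$ there. For trees your argument is fine and matches the paper's leaf-peeling; there one can even bypass the propagation argument by writing the unique solution explicitly, $p_e(a)=|V(T_b)|/|V|$, where $T_b$ is the component of $G\setminus e$ not containing $a$, which is visibly in $(0,1)$.

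Your argument for the final assertion (a graph with triangles admitting no constant-curvature measure) also does not close as written. For the paper's fish graph the forced constant value is $m=\chi(G)/|V|=0$, not $m>1$, so your chain ``$\sum_{v\in{\rm fin}}K(v)\le|{\rm fin}|$ versus $m\,|{\rm fin}|>|{\rm fin}|$'' contradicts nothing; the pointwise bound $K(v)\le 1$ on fin vertices is the wrong inequality. What produces the contradiction is locality plus Gauss--Bonnet restricted to a subcomplex: every simplex distributes its energy only to its own vertices, so summing curvature over the tail $T$ (which is joined to the body by a single bridge edge $e$) gives $\sum_{v\in T}K(v)=\chi(T)-p_e(t)\le \chi(T)=-1<0$, where $t$ is the tail endpoint of $e$; equivalently, the sum over the body is at least $\chi({\rm body})-1=1>0$. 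Either inequality is incompatible with constant curvature $0$. This is the paper's argument, and it needs the signs of the local Euler characteristics of the two pieces, not a count of fin vertices; as stated, your Farkas certificate would only apply to a (different, yet-to-be-constructed) graph with $\chi(G)>|V|$ carrying a triangle-free fin.
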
 

\paragraph{}
Integral geometry produces lots of opportunities. One important point is deformation. 
Both in the continuum as in the discrete, in order to deform space, we can either deform
the exterior derivative which changes distances via Connes metric or then we can 
deform probability measures defining quantities integral geometrically. 
In the first case, this can be done by differential equations, in the second case, where
we have a convex space of measures to play with, one can change distances or curvatures
with gradient flows. 

\section{Curvature}

\paragraph{}
Given a {\bf finite abstract simplicial complex} $G$ and a {\bf direction} $F:G \to V$, 
where $V = \bigcup_x x$ is the {\bf vertex set} of $G$, the index $i = F^*(\omega)$ 
is the push-forward of the signed measure $\omega(x)=(-1)^{{\rm dim}(x)}$ on $G$ which
as an integer-valued function can be seen as a divisor.  A special case is given by the
Whitney complex of a connected digraph graph $\Gamma$ without triangular cycles. In 
this case $F(x)$ is the largest element on $x$ in the partial order defined by the directions.
An even more special case is if $g$ is a locally injective function on $V$ and $F(x)$ is the 
vertex in $x$ where $F$ is maximal. We get then the Poincar\'e-Hopf index  which corresponds
to Poincar\'e-Hopf indices in the continuum.  See \cite{PoincareHopfVectorFields,MorePoincareHopf}
for more on this.

\paragraph{}
Averaging such indices over a probability space of directions $F$ on a graph 
produces a curvature $K(v)$ on $V$. This set-up is simple but it becomes on differentiable
manifolds the classical Poincar\'e-Hopf theorem for vector fields or the Gauss-Bonnet-Chern 
theorem. To see curvature as ``index expectation" is an integral geometric point of view.
The set-up allows to bridge the continuum and the discrete because the definitions of curvature
are then the same. For a differentiable manifold we can chose a probability space of Morse functions
$g$ for example and declare the expectation of $i_g(v)$ to be the curvature $K$ of $M$. If the probability
space is nice then $K$ is a smooth function as in differential geometry. If the probability space 
is the space of all linear functions on an ambient Euclidean space of a Nash embedding and the Haar 
induced measure is chosen on the linear functions, then $K$ is the Euler measure appearing in the 
Gauss-Bonnet-Chern theorem. 

\paragraph{}
Getting back to combinatorics, if $F$ is a Markov process, meaning that a probability vector 
$p_x$ is given on each simplex, then the energy $\omega(x) = (-1)^{\rm dim}(x)$ is distributed 
randomly from the simplices to the vertices, leading to a {\bf curvature}
$K(v) = \sum_{x, v \in x} \omega(x) p_x$. This can be abbreviated as 
$$   K=A \omega $$ 
for the matrix $A(v,x) = p_x(v)$ which is stochastic in the
sense that all column vectors of $A$ are probability vectors. Gauss-Bonnet is
$\sum_{v \in V} K(v)=\chi(G)$, where $\chi(G)=\sum_{x \in G} \omega(x)$. 
The columns of $A$ are probability vectors with the property that $p_x(v)=0$ if $v$
is not in $x$.  

\paragraph{}
If all probability distribution vectors $p_x$ are constant vectors and $G$ is the Whitney 
complex of a graph, then we get the Levitt curvature 
$$  K(v)=1+\sum_{k=0} (-1)^k v_k(S(v))/(k+1)  \; , $$ 
where $v_k(A)$ is the number of $k$-dimensional simplices in $A$ and $S(v)$ 
is the unit sphere of $v$. For $2$-graphs, graphs in which every
unit sphere $S(x)$ is a circular graph with $4$ or more vertices, this gives the curvature
$K(v) = 1-{\rm deg}(v)/6$ which has been known since at least a century.
Historically, it appeared already in \cite{Eberhard1891} and was then considered by 
Heesch \cite{Heesch}. 

\paragraph{}
One can now look at {\bf positive Euler curvature graphs}, which are graphs in 
which this particular curvature $K(v)$ is positive. 
An other point of view is to take a measure $\mu$ on the space of locally injective functions
$g$ and define graphs for which there exists $\mu$ with $K_{\mu}(v)>0$ everywhere. 
We still have to explore for which choices of probability measures $p_x$ one can get
a probability measure $\mu$ on the space $\Omega$ of locally injective functions (colorings), 
such that the induced measure on $x$ is $p_x$. There are obvious cases of choices of
probabilies $p_x$ such that for $y \subset x$ the probability $p_y$ is not 
compatible with $p_x$ preventing a realization as a measure on functions. In the one dimensional
case, where such compatibilities are not present, we can realize any $\{ p_x \}_{x \in G}$ choice
with a measure $\mu$ on $\Omega$. 

\section{Constant curvature}

\paragraph{}
An even more general case if to look at connected simplicial complexes $G$ and curvatures $K_P$ 
defined by having each set $x \in X$ equipped with a probability measure $p_x$. This triggers 
interest in graphs which have positive Euler characteristic but which do not allow for a
positive curvature. Here is a first basic question. If not stated otherwise, the 
simplicial complex of a graph is the Whitney complex. The measure $\mu$ represents a probability 
measure on ``discrete vector fields" which here is implemented through a family $p_x$ of probability
measures on the simplices of the graph. If the probability distributions come from 
a probability measure $\mu$ on locally injective functions on the graph, we speak of a
index expectation curvature. In the one-dimensional case mostly covered here, the two things
are the same. Every family of probability distributions $p_x$ on the edge sets can be realized
through a probability distribution on locally injective functions. But in general the question
can be different and is unexplored. 

\question{
Which connected graphs allow a constant $\mu$ curvature?
For which graphs is there a constant index expectation curvature. 
}

\paragraph{}
If $K$ is a constant curvature on a graph $(V,E)$, 
then by Gauss-Bonnet, it must have the value $\chi(G)/|V|$.
Complexes defined as Whitney complexes of small graphs like complete graphs, 
cyclic graphs, star graphs, wheel graphs, Platonic solid graphs allow for constant
curvature and $P$ is unique. But there are examples where constant curvature is not possible: 
take two graphs, where one has $\chi(G)=2$ and the other $\chi(H)=-2$
and connect them by an edge. This produces Euler characteristic $0$. But the
Euler characteristic of the positive curvature part remains positive after joining.
This is the idea behind the situation given in Figure~\ref{fishgraph}. The argument works
also for manifolds, but only for Gauss-Bonnet-Chern curvature. 
We can always realize constant $\mu$ curvature for compact connected Riemannian manifolds. 

\paragraph{}
We expect that for most graphs, constant curvature is not possible. 
For every vertex $v$, there is a bound which curvature can take on $v$. An upper bound is 
the number of positive dimensional simplices containing $v$, a lower bound 
is minus the number of negative dimensional simplices containing $v$. Let $M$ be the 
minimal bound and let $G$ be such that the average curvature value $\chi(G)/|V|$ is 
larger than $M$. Large curvature ratios are frequent as one can see by looking at
Euler characteristic averages on Erdoes-Renyi spaces of graphs or by taking joins of graphs
for which $\chi(G+H) = -\chi(G) \chi(H)+\chi(G) +\chi(H)$ and the number of vertices add
$|V(G+H)| = |V(G)| + |V(H)|$. 
One would have to distribute the values of the simplex curvatures to a larger neighborhood
in order to get a similar result as in the continuum and get constant curvature on any graph. 
 
\paragraph{}
Deciding whether a graph allows positive curvature is an inverse problem for
Markov processes. Given all the measures $p_x$, the
curvature $K$ is an equilibrium measure. The measures $p_x$ can be seen as column vectors in 
a stochastic $m \times n$ matrix, where $m=|V|$ is the number of vertices and $|G|$ is the
number of simplices in $G$. The probability vectors are the
columns of $A$ and we have $A(x,v)=0$ if $v$ is not in $x$.
We want to seen whether it is possible that $K=A \omega$ is constant. 

\begin{figure}[!htpb]
\scalebox{0.6}{\includegraphics{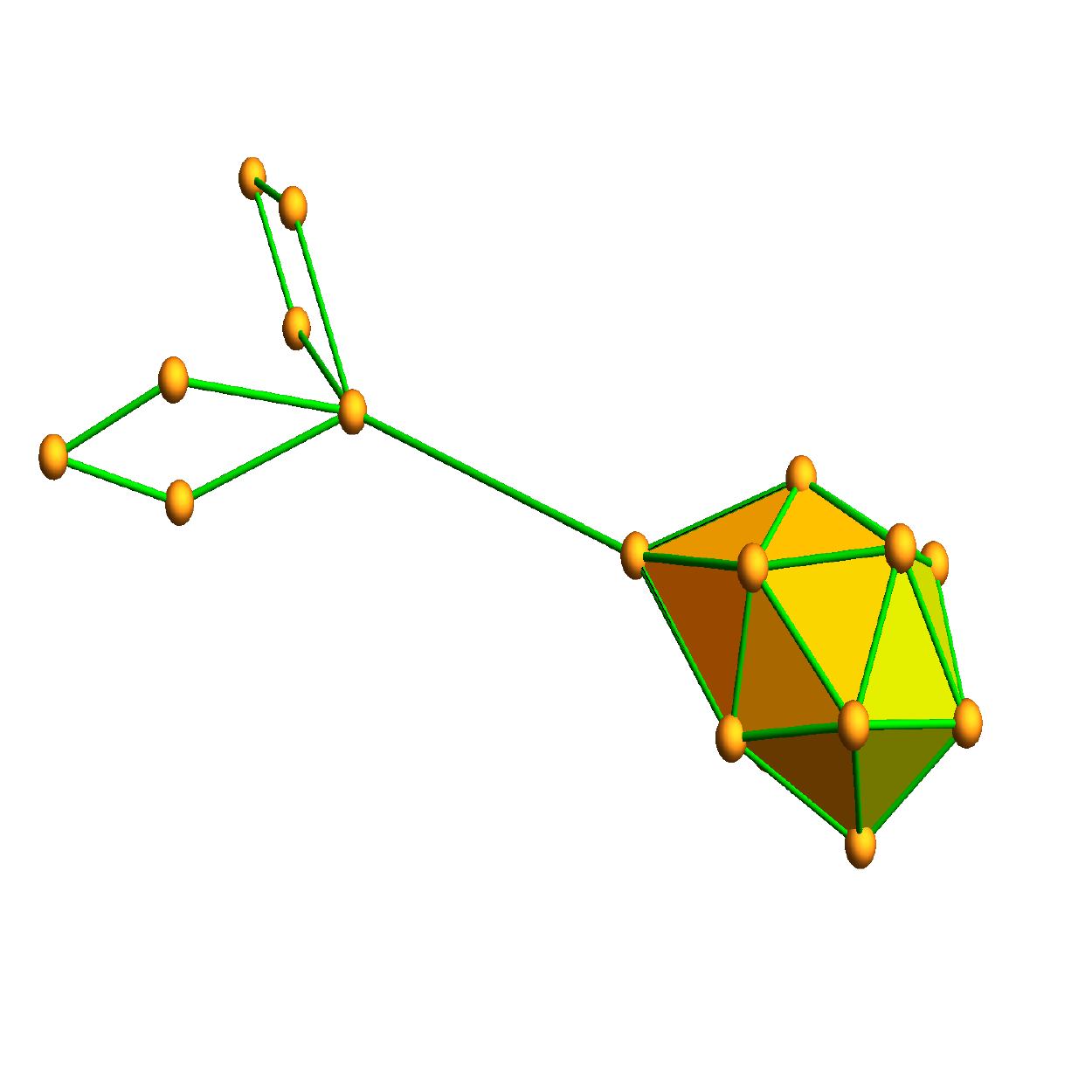}}
\label{fishgraph}
\caption{
This fish graph has Euler characteristic $0$ and does not allow for 
constant curvature. The curvature would have to be zero everywhere
but the separated fish body has Euler characteristic $2$ while the 
separated tail has Euler characteristic $-1$. Joining the two parts
by an edge lowers the Euler characteristic to $0$. The edge $e$ energy
$\omega(e)=-1$ can only be transported to the body or the tail. 
The body has now minimal Euler characteristic $1$.
}
\end{figure}

\section{Energized complexes}

\paragraph{}
The question can be generalized to energized complexes \cite{EnergizedSimplicialComplexes}. 
Let $G$ be a finite abstract simplicial
complex, $h: G \to \mathbb{C}$ an energy function and total energy 
$H(G)=\sum_x h(x)$ extending to $H(A) = \sum_{x \in A} h(x)$ on all sub complexes of $G$
Let $P$ assign to every simplex a probability distribution
so that $G$ is a collection of finite probability distributions. Define
as before the curvature $K=P H$. This means that the energy $h(x)$ is 
distributed randomly to vertices of $x$ according to the probability measure 
$p_x$ on $x$. Let us call $(G,H,P)$ an {\bf energized weighted simplicial complex}.

\paragraph{}
We can now think of a constant curvature as a type of {\bf equilibrium}.
The topological case $H(x)=\omega(x)$ is just a special case. 
Allowing the energy $H$ to be real or complex valued allows to think of 
$H$ as a {\bf wave amplitude} or wave as in quantum mechanics. The question
is now whether there is a way to guide the energy on every simplex so 
that the energy is the same on each vertex. 

\question{ Which energized weighted simplicial complexes $(G,H)$ allow for 
constant curvature?
}

\paragraph{}
Even for trees, the energies can not be too far away. Lets look at the 
case where $G=K_2$ is the one dimensional simplex, a very simple tree 
with $2$ vertices $\{1,2\}$ and $1$ edge $\{(1,2)\}$. Let $H(1)=a, H(2)=b$
and $H((1,2))=c$. Now, the total energy is $a+b+c$ and the constant curvature
would have to be $(a+b+c)/3$. We need now a $p \in [0,1]$ 
such that $a+p c= (a+b+c)/3$ and $b+(1-p) c = (a+b+c)/3$. 

\paragraph{}
One can make the problem more intricate by linking $H$ with $P$. 
One can for example define $H(x)$ to be the entropy 
$S(x) = -\sum_{v \in x} p_x(v) \log(p_x(v))$ of $p_x$. A 
variant is to take $H(x) = S(x) \omega(x)$. 
Now, the question is whether there exists a probability distribution
$p_x$ on each simplex such that its entropy is distributed evenly. 
By symmetry this happens for example if $G$ is the complete complex. 

\paragraph{}
There is an other variant in which we ask the curvature to stay quantized.
The Riemann-Roch theorem for graphs is related to a chip firing game in which
divisor values can be distributed to neighboring vertices. The analogue of a
divisor is a general integer valued function $H: G \to \mathbb{Z}$.
Given such a function $H$, we can ask to distribute $H$ to the vertices and
ask to minimize the variance. One could also ask to keep curvatures integers
leading to analogues of Poincar\'e-Hopf indices.

\section{Classical questions}

\paragraph{}
In the continuum, the analogue question is which
even dimensional manifold admit constant Euler curvature entering
Gauss-Bonnet-Chern $\int_M K \; dV=\chi(M)$. Related to the Hopf
conjectures is already whether there is a metric on $S^2 \times S^2$
with constant $K$:

\question{
For which manifolds is there a metric such that
the Gauss-Bonnet-Chern integrand for $(M,g)$ is constant? 
}

\paragraph{}
There are many questions in
differential geometry which asks for ``which compact Riemannian manifolds admit constant
curvature of some kind. Constant Ricci curvature gives Einstein manifolds.
Constant mean curvature surfaces produce minimal surfaces.
One can therefor ask the question for {\bf Euler curvature} which appears in the 
Gauss-Bonnet-Chern theorem $\int_M K(x) \; dV =\chi(M)$ for compact even dimensional
manifolds. 

\paragraph{}
Which compact 2-dimensional Riemannian manifolds have a constant Euler curvature? 
By uniformization and taking universal covers in the non-orientable case,
every connected two-dimensional compact manifold allows for a constant
Euler curvature metric.

\paragraph{} 
Here is a simple observation: 

\begin{propo}
There are compact $4$-dimensional Riemannian manifolds $M$ which 
do not admit a constant Euler curvature (Gauss-Bonnet-Chern integrand). 
\end{propo}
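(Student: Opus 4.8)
The plan is to reduce the statement to a fact about the \emph{sign} of the Gauss--Bonnet--Chern integrand and then to exhibit a connected sum whose topology forces two regions to demand opposite signs. First I would fix the normalization: write $K$ for the Gauss--Bonnet--Chern integrand of a metric $g$ on $M^4$, so that
\begin{equation*}
\chi(M)=\int_M K\,dV, \qquad 8\pi^2 K=|W|^2-\tfrac12|\mathrm{Ric}_0|^2+\tfrac1{24}s^2,
\end{equation*}
with $W$ the Weyl tensor, $\mathrm{Ric}_0$ the traceless Ricci tensor and $s$ the scalar curvature. If $K$ is constant then Gauss--Bonnet forces $K\equiv\chi(M)/\mathrm{Vol}(M)$, so the pointwise sign of $K$ must everywhere equal the sign of $\chi(M)$; in particular, when $\chi(M)=0$ a constant Euler curvature would have to vanish identically. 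This is the continuum form of the observation that a constant curvature must be the average $\chi(G)/|V|$.

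A warning the plan must respect is that many natural candidates already carry constant $K$. For a Riemannian product of two surfaces with the product metric one computes $K=c\,K_1K_2$ pointwise for a positive constant $c$, where $K_1,K_2$ are the two Gauss curvatures; hence $\Sigma_2\times S^2$ with the hyperbolic-times-round metric has $K$ a negative constant, and every space form has constant $K$ as well. So the counterexample cannot be locally homogeneous: it must genuinely glue together pieces whose preferred local sign of $K$ disagrees, exactly as the fish graph glues a body wanting positive curvature to a fin that cannot produce it.

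I would therefore take $M$ to be a connected sum such as $M=(S^2\times S^2)\#(S^2\times S^2)\#(\Sigma_2\times S^2)$, using $\chi(A\#B)=\chi(A)+\chi(B)-2$ in dimension four to arrange a prescribed Euler characteristic (here $\chi=4+4-4-2-2=0$, so a constant $K$ would be forced to be $K\equiv0$). The two $S^2\times S^2$ summands are modeled on a region where the natural metric has $K>0$, while the $\Sigma_2\times S^2$ summand is modeled on a region where $K<0$. The strategy is to localize Gauss--Bonnet--Chern over each summand-with-a-ball-removed $P$ using the Chern transgression,
\begin{equation*}
\int_P K\,dV=\chi(P)-\int_{\partial P}\mathrm{TP}(g),
\end{equation*}
and to argue that the bulk term retains the sign of the Euler characteristic of that piece, so that no single metric can make $K$ a constant of one sign across the whole of $M$.

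The hard part will be controlling the transgression term $\int_{\partial P}\mathrm{TP}(g)$ on the connecting necks, which is not topological and is not a priori bounded: a metric is free to pump curvature into a thin neck and thereby shift the apparent sign of each piece. The crux of a complete proof is thus a rigidity or scaling estimate showing that the neck contribution stays negligible (for instance by pinching the neck and bounding the transgression in terms of the neck geometry), so that the positive-characteristic region keeps a strictly positive integral of $K$ while the negative region keeps a strictly negative one, contradicting $K\equiv0$. This is the precise continuum analogue of the locality of discrete curvature, where redistribution of $\omega(x)$ is confined to the simplex $x$; here one must quantify that the Pfaffian cannot be redistributed across the neck, and that quantitative locality is the main obstacle. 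Where such an estimate is unavailable one falls back on an explicit example, invoking a metric-functional obstruction in the spirit of \cite{Weinstein71}.
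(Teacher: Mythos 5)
Your strategy is the same as the paper's: build a connected sum out of pieces whose Euler characteristics have opposite signs, localize Gauss--Bonnet--Chern over each piece via the Chern transgression, and force a sign conflict with the constant value $\chi(M)/\mathrm{Vol}(M)$. The paper uses two summands with $\chi(M_1)\ge 4$ and $\chi(M_2)\le -4$, so that the constant would be negative; you use three summands arranged so that $\chi(M)=0$ and the constant would be $0$; that difference is cosmetic. The substantive difference is that you say explicitly that the argument is incomplete until the transgression term on the separating sphere is controlled, and you do not control it --- so what you have written is a strategy with a named hole, not a proof.

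Moreover, the gap you flag cannot be closed in the way you suggest. In a proof by contradiction the metric is the hypothetical constant-curvature one, so you are not free to ``pinch the neck'': the neck geometry is dictated by that metric, not chosen by you. Worse, the estimate you hope for (``the neck contribution stays negligible'') is provably false for that metric. Writing $T$ for the transgression integrand, your own identity applied to the piece $P$ equal to $S^2\times S^2$ minus a ball gives, when $K\equiv 0$, that $\int_{\partial P} T=\chi(P)-\int_P K\,dV=3$: the transgression over the separating sphere is forced to be large, not small. Nor does any inconsistency arise from comparing the two sides of that sphere $\Sigma$: the boundary integrand is odd in the second fundamental form, hence odd under reversal of the normal, so computed from $Q=M\setminus P$ (which has $\chi(Q)=-3$) it gives $\int_\Sigma T'=-\int_\Sigma T=-3=\chi(Q)-\int_Q K\,dV$, exactly as Gauss--Bonnet--Chern for manifolds with boundary demands. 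The two localized identities are automatically compatible for every metric, so locality plus sign bookkeeping can never by itself produce a contradiction; what is needed is an a priori bound on the transgression integral of some separating $3$-sphere valid for all metrics on $M$, and neither you nor, for that matter, the paper supplies one --- the paper's step that the ball ``can be made arbitrarily small'' presumes control over the very metric whose existence is being refuted, which is the same circularity. Your proposal is essentially a candid version of the paper's argument, with the missing estimate named rather than asserted; as it stands it does not prove the proposition.
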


\begin{proof}
Take two compact connected 4-manifolds $M_1,M_2$, where $M_1$ has
Euler characteristic larger or equal than $4$ and $M_2$ has Euler characteristic smaller
or equal than $-4$. Now make a connected sum $M_1 +_B M_2$ along a 4-ball $B$ obtained by 
removing 4-balls $B_i$ from $M_i$ and gluing together along the boundary $3$-sphere. 
When combining, we lose the Euler characteristic of the two balls and have therefore $\chi(M)=\chi(M_1) + \chi(M_2)-2=-2$. 
By locality, we have to keep the Euler curvatures on $M_1$ and $M_2$. We can complement the 
connecting tube $N_0$ with mostly zero Euler curvature. Now $M=N_1 \cup N_2 \cup N_0$, where $N_i=M_i \setminus B$ 
and $N_0$ intersect in spheres. Now assume we can equip the connected sum $M$ with a constant 
curvature $K$. Because $\chi(M)=-2$ it is negative, the curvature $K$ would have to be negative. 
The manifold $N_1$ has now negative curvature in the interior and must by Gauss-Bonnet-Chern
have at least normalized curvature $4$ at the boundary. The boundary however bounds a 
4-ball $N_0$ of Euler characteristic $2$ which can be made arbitrary small so that the 
normalized boundary curvature can be made arbitrary close to $2$. 
That is incompatible with having to be $4$. 
\end{proof}

\paragraph{}
The argument can be done in any even dimension larger than $2$. 
This argument obviously does not work in dimension $2$ because connected $2$-manifolds
have Euler characteristic $\leq 2$. For example, when taking the connected sum of a genus $g_1$
surface with a genus $g_2$ surface, we get a surface with genus $(g_1 + g_2)$. It is only in 
dimension $4$ or higher that we can realize manifolds of Euler characteristic $4$. An example
is $S^2 \times S^2$. An concrete example of a $4$-manifold with Euler characteristic $-4$
is the Cartesian product $N \times N$ of two $2$-surfaces of genus $3$. 

\paragraph{}
Inverse questions about curvature are difficult in general. This is illustrated
by one of the celebrated open Hopf conjecture which asks whether there is a metric on $S^2 \times S^2$
for which the sectional curvature is positive. The question whether
there exists a metric with positive Euler curvature is not settled but we are also not aware whether
it is known whether there exists a metric on $S^2 \times S^2$ for which the Euler curvature
is constant. 

\section{Existence and uniqueness}

\paragraph{}
Let us look now at the discrete $1$-dimensional case, where we have a finite simple graph $(V,E)$.
Let $G$ be the simplicial Whitney complex which is here just the union of the vertex set $V$ and edge
set $E$. The function $\omega(x) = (-1)^{{\rm dim}(x)}$ on $G$ has the total sum $\sum_x \omega(x) = \chi(G)$. 
In order to realize constant curvature, we need to find a stochastic $|V| \times (|V| + |E|)$ matrix 
$A$ such that 
$$  A \omega = \chi(G)/|V| \;  $$
and such that $A_{vx}=0$ if $v$ is not a subset of $x$. 
This is a system of $m$ equations which together with the probability assumption $\sum_{v \in x} p_x(v)=1$
produces $m+n$ equations. There are $\sum_x {\rm dim}(x)+1$ unknowns $A_{ij}$. So, there is a 
large dimensional space of solutions but the question is whether we can get solutions 
satisfying $A_{ij} \geq 0$.  We see: 

\begin{propo}
The existence problem of constant $\mu$ curvature on a finite simple 
graph is a linear programming problem. 
\end{propo}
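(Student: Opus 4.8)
The plan is to exhibit the constant-curvature condition explicitly as a finite system of affine-linear equalities and inequalities in the entries of the stochastic matrix $A$, and then to observe that asking whether this system has a solution is, by definition, a linear feasibility problem.

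First I would fix the unknowns. For every incidence pair $(v,x)$ with $v \in x$ and $x \in G$, introduce the real variable $A(v,x) = p_x(v)$; there are $\sum_{x \in G}({\rm dim}(x)+1)$ of them. Collecting these into the matrix $A$, the curvature vector $K = A\omega$ is a linear function of the unknowns, where $\omega(x) = (-1)^{{\rm dim}(x)}$.

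Next I would record the three families of constraints, each affine-linear in the $A(v,x)$. (i) The constant-curvature equations $(A\omega)(v) = \chi(G)/|V|$ for every $v \in V$ give $|V|$ linear equalities; note that by Gauss-Bonnet $\sum_{v} (A\omega)(v) = \sum_x \omega(x) = \chi(G)$ holds identically, so one of these equations is redundant. (ii) The stochasticity equations $\sum_{v \in x} A(v,x) = 1$, one per simplex $x \in G$, give a further $|G|$ linear equalities. (iii) The nonnegativity conditions $A(v,x) \ge 0$ give linear inequalities; the upper bounds $A(v,x) \le 1$ are then automatic from (ii) and (iii) and need not be imposed separately.

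Finally I would conclude. The set of admissible matrices $A$ is the intersection of the affine subspace cut out by the equalities (i)--(ii) with the nonnegative orthant (iii): a convex polytope $P$, possibly empty. Deciding whether the graph admits constant $\mu$-curvature is exactly deciding whether $P \neq \emptyset$, the feasibility form of a linear program, solvable for instance by a phase-one simplex method (adjoin an auxiliary objective whose optimal value is zero precisely when $P \neq \emptyset$). I anticipate no genuine obstacle: since variables are introduced only for incident pairs $v \in x$, the locality requirement $A(v,x)=0$ for $v \notin x$ is built in rather than imposed, and every remaining constraint is manifestly affine-linear, so the only thing to verify is this linearity. The content of the statement is thus the reformulation itself; if one additionally wants a canonical solution when $P \neq \emptyset$, minimizing ${\rm Var}[K]$ over $P$ is a convex quadratic program, but the existence question alone is purely linear.
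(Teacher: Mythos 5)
Your proposal is correct and follows essentially the same route as the paper: introduce the entries $A(v,x)=p_x(v)$ as variables (with the support condition built in), write the constant-curvature and stochasticity conditions as linear equalities, impose nonnegativity, and observe that existence is a linear feasibility problem. Your added remarks --- that Gauss--Bonnet makes one curvature equation redundant and that the upper bounds $A(v,x)\le 1$ follow automatically --- are small refinements of the paper's argument, not a different method.
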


\paragraph{}
Example: 
Let $G=\{ (1),(2),(3),(12),(23)\}$ be the smallest line graph. There are two parameters $p,q$ and
we have to solve  $A \left[ \begin{array}{c} 1 \\ 1 \\ 1\\ -1 \\ -1 \end{array} \right] 
                  = \left[ \begin{array}{c} 1/3 \\ 1/3 \\ 1/3   \end{array} \right]$,
for the $3 \times 5$ matrix:
$$ A = \left[  \begin{array}{cccccc} 
              1 & 0 & 0 & p   & 0 \\
              0 & 1 & 0 & 1-p & q \\
              0 & 0 & 1 & 0   & 1-q \end{array} \right]  \; . $$
This leads to the three equation s $1-p=1/3, 1-(1-p)-q = 1/3, 1-(1-q)=1/3$
which gives $p=2/3,q=1/3$. 
% Solve[{1-p==1/3, 1-(1-p)-q==1/3,1-(1-q)==1/3},{p,q}] 

\paragraph{}
Assume we find $P$ such that $P \omega = K$ is constant. Is this unique? 
The set of solutions $P$ is a convex set. The inverse problem for the
stochastic matrix $P$ with constraint that $p_x(v)=0$ if $v$ is not in $x$
can have infinitely many solutions or have a unique solution. 

\begin{thm}
If $G$ is a one-dimensional simplicial complex which is a tree, then 
there is a unique probability space in each edge for which curvature is constant. 
\end{thm}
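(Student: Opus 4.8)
The plan is to turn the constant-curvature condition into an explicit linear system on the edges and then exhibit a closed-form solution, which settles existence, feasibility, and uniqueness in one stroke. First I would specialize the curvature formula to dimension one. Here $\omega(v)=1$ on vertices and $\omega(e)=-1$ on edges, and each vertex trivially carries $p_v(v)=1$, so $K(v)=1-\sum_{e\ni v}p_e(v)$. A tree on $n=|V|$ vertices has $\chi(G)=1$, so by Gauss-Bonnet any constant curvature is forced to equal $1/n$. Thus the problem is exactly the linear system
$$ \sum_{e \ni v} p_e(v) = \frac{n-1}{n} \quad (v \in V), $$
together with the edge normalizations $p_e(a)+p_e(b)=1$ and the feasibility constraints $p_e(v)\in[0,1]$.

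For existence I would write the solution down directly. For an edge $e=\{a,b\}$, deleting $e$ splits the tree into two subtrees; let $T_{\bar v}$ denote the component not containing $v$, and set $p_e(v)=|T_{\bar v}|/n$. Because the two component sizes are between $1$ and $n-1$ and add to $n$, these weights lie in $(0,1)$ and sum to $1$ on each edge, so feasibility holds automatically --- this is the one place where a general linear program could fail, and the combinatorial formula makes it transparent. To check the vertex equation at $v$, I would use that deleting $v$ from a tree disconnects it into $\deg(v)$ branches, one through each incident edge $e_j=\{v,w_j\}$, with $p_{e_j}(v)$ equal to the size of that branch divided by $n$. Since the branches partition $V\setminus\{v\}$, their sizes sum to $n-1$, giving $\sum_{e\ni v}p_e(v)=(n-1)/n$ as needed.

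For uniqueness I would show the homogeneous system has only the trivial solution. Given two solutions $p,p'$, put $d_e(v)=p_e(v)-p'_e(v)$, so that $d_e(a)+d_e(b)=0$ on each edge and $\sum_{e\ni v}d_e(v)=0$ at each vertex. Orienting the edges and setting $f(e)=d_e(\mathrm{tail}(e))$ converts these into a divergence-free flow, and the space of such flows is the cycle space, of dimension $b_1=1-\chi(G)=0$ for a tree; hence $f\equiv0$ and $p=p'$. Equivalently and more elementarily, I would peel leaves: at a leaf $v$ the single term forces $d_e=0$ on its edge, after which deleting $v$ returns a smaller tree of the same form and induction on $n$ closes the argument.

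The step I expect to be the real obstacle is not the verification but guessing the right closed form $p_e(v)=|T_{\bar v}|/n$; once it is in hand, both feasibility and the vertex identity reduce to the single fact that deleting a vertex of a tree partitions the rest into the branches counted by its incident edges. The only bookkeeping to watch is that the weight an edge assigns to an endpoint is governed by the \emph{opposite} component, and that the target constant must be pinned to $1/n$ before solving. This specializes the earlier convexity statement, whose solution dimension $1-\chi(G)$ degenerates to $0$ precisely in the tree case.
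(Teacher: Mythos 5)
Your proof is correct, and it takes a genuinely different route from the paper's. The paper argues by induction on the number of edges: at a leaf $x$, the condition $K(x)=1/|V|$ forces the probability on the leaf's edge, and one then recurses on the remaining tree (the bookkeeping this recursion needs --- the target constant changes once a leaf is deleted --- is really only handled in the paper's \emph{next} theorem, where leaves are allowed to carry energies $H(v)\in[0,1]$). You instead exhibit the solution in closed form, $p_e(v)=|T_{\bar v}|/n$ with $T_{\bar v}$ the component of the tree minus $e$ not containing $v$, and verify the vertex equations via the branch decomposition at $v$; your formula agrees with the paper's worked examples ($p=2/3$, $q=1/3$ for the path on three vertices, and $p=q=r=1/4$ for the star $S_3$). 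This buys two things the paper's induction leaves implicit: feasibility $p_e(v)\in(0,1)$ is transparent rather than something to be propagated through an induction, and the solution is constructive. For uniqueness, your cycle-space argument (differences of two solutions form a divergence-free flow, and the cycle space has dimension $b_1=1-\chi(G)=0$ for a tree) is also different from the paper's and has the bonus that it generalizes verbatim to the $b_1$-dimensional solution space asserted in the paper's subsequent theorem for general one-dimensional complexes; your alternative leaf-peeling uniqueness argument is essentially the paper's own induction, so you have both routes in hand.
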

\begin{proof}
Proof by induction. There is nothing to prove if the tree is a seed $K_1$, meaning
that there is no edge. The proof goes by induction with respect to the number of 
edges. Assume the statement has been proven for graphs with $n$ edges,
take a graph with $n+1$ edges, pick leaf, an end point $x$ of the tree which has only one neighbor. Since
the curvature has to be $1/|V|$ we know $p_x$ on the edge $e$ leading to $x$. This
defines the probability space on this edge. Now take the equilibrium measure which is 
given on the rest by induction. 
\end{proof} 

\paragraph{}
We can also see it from linear algebra: we have a free variable for every edge given $|E|$ variables.
and $|V|$ equations where one of them is automatically satisfied by the
Gauss-Bonnet formula. So, we have the same number of variables than equations. 
We can not have $|E| < |V|-1$ for one dimensional connected complexes
as this would imply $\chi(G)>1$ and we know that $\chi(G)=b_0-b_1 = 1-b_1 \leq 1$. 
Indeed, the Betti number $b_1$ matters:

\begin{thm}
For a one-dimensional complex, the solution space of probability measures $P$
satisfying the constant curvature equation $P \omega = K$ is $b_1$-dimensional. 
\end{thm}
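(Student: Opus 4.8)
The plan is to reduce the statement to elementary linear algebra over the edge set and then recognize the operative linear map as the incidence matrix of the graph. Since $G$ is one-dimensional, its only simplices are the $|V|$ vertices and the $|E|$ edges, with $\omega=+1$ on vertices and $\omega=-1$ on edges. The probability space on a vertex $(v)$ is forced to be the Dirac mass $p_{(v)}(v)=1$, so the only genuine freedom is a single scalar $t_e=p_e(a)\in[0,1]$ per edge $e=(a,b)$, with $p_e(b)=1-t_e$. The curvature then reads $K(v)=1-\sum_{e\ni v}p_e(v)$, and by Gauss-Bonnet the common value must be $m=\chi(G)/|V|$, so the constant-curvature equation $P\omega=K$ becomes the affine system
$$ \sum_{e \ni v} p_e(v) = 1 - m, \qquad v \in V. $$
This exhibits the solution set as the intersection of an affine subspace of $t$-space $\mathbb{R}^{|E|}$ with the cube $[0,1]^{|E|}$.

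First I would orient each edge arbitrarily and read off the linear part of the map $t\mapsto\bigl(\sum_{e\ni v}p_e(v)\bigr)_v$. The coefficient of $t_e$ in the equation at $v$ is $+1$ if $v$ is the tail of $e$, $-1$ if $v$ is the head, and $0$ otherwise; this is precisely the signed incidence matrix $S$ of the oriented graph, an $|V|\times|E|$ matrix. Hence, provided the system is consistent, the affine solution space $\mathcal{A}$ has dimension $|E|-\operatorname{rank}(S)$. I would then invoke the classical fact from algebraic graph theory that $\operatorname{rank}(S)=|V|-b_0$, with $b_0$ the number of connected components, the kernel of $S$ being exactly the cycle space of $G$. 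This yields $\dim\mathcal{A}=|E|-|V|+b_0=b_1$, which specializes to $1-\chi(G)$ in the connected case and is consistent with the earlier theorem. Consistency of the real system is automatic in the connected case by Gauss-Bonnet, and in general requires each component to share the ratio $\chi(G_i)/|V_i|=\chi(G)/|V|$; I would note this hypothesis, since otherwise the solution set is empty.

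The remaining point, and the main obstacle, is confirming that the convex body of genuine \emph{probability} solutions $\mathcal{A}\cap[0,1]^{|E|}$ is truly $b_1$-dimensional rather than collapsing onto a facet of the cube. The free directions are parametrized by $\ker S$, the cycle space: adding any cycle vector to a solution gives another solution, so the degrees of freedom live on edges lying in cycles, while bridge-edges have their parameter pinned by the constraints (this is why trees, with $b_1=0$, are rigid, and why the forced leaf values $p_e(v)=1-1/|V|\in[0,1)$ stay admissible). It therefore suffices to exhibit one solution whose cycle-edge coordinates are strictly interior, since then a small ball in $\mathcal{A}$ about it remains inside the open cube and witnesses full dimension. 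Such a solution is obtained by starting from any admissible point (whose existence is supplied in the connected triangle-free case by the earlier theorem) and sliding along $\ker S$ to push boundary coordinates on cycle edges into $(0,1)$; because the free directions are supported on cycles, this perturbation can be taken small enough to respect the bridge constraints. Verifying that this sliding can always be carried out without leaving the cube is the only delicate step, and it is exactly where the one-dimensional (triangle-free) hypothesis and the component bookkeeping enter.
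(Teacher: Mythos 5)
Your affine computation is correct, and it takes a genuinely different route from the paper: the paper argues by induction on $b_1$, cutting loop edges and strengthening the claim to trees whose leaves carry energies $H(v)\in[0,1]$, while you identify the linear part of the constant-curvature system with the signed incidence matrix $S$, invoke $\operatorname{rank}(S)=|V|-b_0$ with $\ker S$ equal to the cycle space, and check consistency via Gauss--Bonnet. This cleanly shows that the set of \emph{real} solutions of $P\omega=\chi(G)/|V|$ is an affine space of dimension $b_1$.

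The genuine gap is exactly the step you yourself label delicate and then leave unfinished: exhibiting an admissible solution in the interior of the cube, so that $\mathcal{A}\cap[0,1]^{|E|}$ is nonempty and $b_1$-dimensional rather than a lower-dimensional face or empty. Your existence input is circular: the ``earlier theorem'' you cite for an admissible starting point is the triangle-free statement whose proof the paper explicitly defers to the very theorem you are proving, and the only other candidate (the tree theorem) covers just $b_1=0$. Worse, no sliding argument can close the gap, because existence fails in general: take two $4$-cycles glued at a vertex and attach a single pendant edge ending in a leaf $u$. This is a connected, triangle-free, one-dimensional complex with $\chi=-1$ and $b_1=2$, so the constant curvature would have to be $-1/8$; but every admissible $P$ gives $K(u)=1-p_e(u)\geq 0$ at the leaf, so the solution polytope is empty, not $2$-dimensional. (Feasibility here is a transportation problem: it requires $|E(S)|\leq |S|\,|E|/|V|$ for every $S\subseteq V$, a condition invisible to the affine analysis; it fails for $S=V\setminus\{u\}$, where $8\not\leq 7\cdot 9/8$.) To be fair, the paper's own induction suffers from the same defect --- its strengthened tree statement already fails for a star on four vertices with two leaf energies set to $0$, which would force a probability $5/4$ on the remaining leaf edge --- so the obstruction sits in the statement itself; but as a proof of the stated theorem, your argument, like the paper's, does not go through without an additional hypothesis (for instance a density or minimum-degree condition) ruling out such pendant configurations.
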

\begin{proof}
Use induction with respect to the number $b_1$ of loops. It is easier for induction 
to prove a slightly stronger statement. Lets call a vertex $v$ in a graph with 
vertex degree $1$ a ``leave". Leaves are vertices with energy $\omega(v)=1$. We 
can more generally assign to a leaf an energy $H(v) \in [0,1]$. With $H(x)=\omega(x)$
for the other simplices in the graph, we have the total energy $H(G)=\sum_x H(x)$. 
It is now smaller than $\chi(G)$. Still, we can distribute the energies of the edges
around to get constant curvature. We prove this statement by induction.
In the case of $b_1=0$ loops we have a tree for which we have uniqueness in producing
the curvature. Also this case can be proven by induction with respect to the number of vertices: 
just cut one leave and the corresponding stem edge to that leave to get the statement for one vertex less.
For a tree we can even replace the energy $\omega(v)=1$
of a finite number $k$ of ``leaf vertices" (a vertex with vertex degree 1), with $\omega(v)=0$ and lower
the total energy from $\chi(G)=1$ to $1-k$. Now, we can realize the constant curvature $(1-k)/n$
everywhere.  By cutting a loop by removing an edge $e$ we obtain two more leaves 
and moving $\omega(e)=-1$ to one of the two vertices, we get a graph with one loop less for which
the energy at one leaf is $0$. 
For any generator $\gamma$ of the fundamental group we have a one-parameter choice to 
shift around probabilities around $\gamma$. This shows that we have a $b_1$ dimensional
space of solutions. Alternatively, we can make $b_1$ cuts to get a tree and for each
cut, removing an edge $e=(a,b)$, we have a choice how to distribute the energy 
$\omega(e)=-1$ to the two boundary points. 
\end{proof}

\paragraph{}
Here are some examples. \\

1) In the case of a star graph $S_3 = \{ (1),(2),(3),(4),(12),(13),(14) \}$ 
which is an example of a tree, we have a $4 \times 7$ matrix 
$$ P = \left[  \begin{array}{ccccccc} 
              1 & 0 & 0 & 0   & p   &  q  & r   \\
              0 & 1 & 0 & 0   & 1-p &  0  & 0   \\
              0 & 0 & 1 & 0   & 0   & 1-q & 0   \\ 
              0 & 0 & 0 & 1   & 0   &  0  & 1-r \\ 
  \end{array} \right]  \; . $$
The constant curvature equation
$P [1,1,1,1,-1,-1,-1]= [1,1,1,1]/4$ has only one solution $P$
which is given by $p=q=r=1/4$. 
% Solve[{1-p-q-r==1/4, 1-(1-p)==1/4,1-(1-q)==1/4,1-(1-r)==1/4},{p,q,r}] 

\paragraph{}
2) In the case of a circular graph
$C_n$, we have an entire interval of solutions. Just pick the same probability
space in each case. For example, for the cyclic complex 
$C_3 = \{ (1),(2),(3),(12),(23),(31) \}$, we have 
$$ P = \left[  \begin{array}{cccccc} 
              1 & 0 & 0 & p   & 0   & 1-r \\
              0 & 1 & 0 & 1-p & q   &  0  \\
              0 & 0 & 1 & 0   & 1-q &  r  \\ \end{array} \right]  \; . $$
% Solve[{1-p-(1-r)==0, 1-(1-p)-q==0,1-(1-q)-r==0},{p,q,r}] 

\paragraph{}
3) Let us now look at the case of the triangle $K_3$ which is obtained 
from $C_3$ by adding a $2$-dimensional cell 
$K_3 = \{ (1),(2),(3),(12),(23),(31),(123)\}$. We have 
$$ P = \left[  \begin{array}{ccccccc} 
              1 & 0 & 0 & p   & 0   & 1-r & s \\
              0 & 1 & 0 & 1-p & q   &  0  & t \\
              0 & 0 & 1 & 0   & 1-q &  r  & 1-s-t \\ \end{array} \right]  \; . $$
In this case, we can chose $p,q,r$ and $t,s$ is determined. Since there are three
free variables $p,q,r$, there is also here no uniqueness. In general, for higher dimensional
spaces it is easier to distribute out curvature so that it becomes constant. 
% Solve[{1-p-(1-r)+s==1/3, 1-(1-p)-q+t==1/3,1-(1-q)-r+1-s-t==1/3},{p,q,r,s,t}] 

\bibliographystyle{plain}

\end{document}